\documentclass[12pt]{amsart}
\usepackage{amssymb}
\usepackage{t1enc}
\usepackage[latin2]{inputenc}
\usepackage{verbatim}
\usepackage{amsmath,amsfonts,amssymb,amsthm}
\usepackage[mathcal]{eucal}
\usepackage{enumerate}
\usepackage[centertags]{amsmath}
\usepackage{graphics}

\setcounter{MaxMatrixCols}{10}

\setlength{\oddsidemargin}{-2truemm}
\setlength{\evensidemargin}{-2truemm}
\setlength{\topmargin}{-1.6truecm} \setlength{\textheight}{23.5cm}
\textwidth16.5cm
\parskip 1.5mm
\parindent 3mm
\setlength{\abovedisplayskip}{10pt plus 2.5pt minus 7.5pt}
\setlength{\belowdisplayskip}{10pt plus 2.5pt minus 7.5pt}
\newtheorem{theorem}{Theorem}

\numberwithin{equation}{subsection}

\begin{document}
\author{G. Tephnadze}
\title[Strong convergence ]{A note on the strong convergence of
two--dimensional Walsh-Fourier series}
\address{G. Tephnadze, Department of Mathematics, Faculty of Exact and
Natural Sciences, Tbilisi State University, Chavchavadze str. 1, Tbilisi
0128, Georgia}
\email{giorgitephnadze@gmail.com}
\date{}
\maketitle

\begin{abstract}
The main aim of this paper is to investigate the quadratical partial sums of
the two-dimensional Walsh-Fourier series$.$
\end{abstract}

\textbf{2010 Mathematics Subject Classification.} 42C10.

\textbf{Key words and phrases:} Walsh system, Strong convergence, martingale
Hardy space.

Let $\mathbf{N}_{+}$ denote the set of positive integers, $\mathbf{N:=%
\mathbf{N}_{+}\cup \{}0\mathbf{\}.}$ Denote by $Z_{2}$ the discrete cyclic
group of order 2, that is $Z_{2}=\{0,1\},$ where the group operation is the
modulo 2 addition and every subset is open. The Haar measure on $Z_{2}$ is
given such that the measure of a singleton is 1/2. Let $G$ be the complete
direct product of the countable infinite copies of the compact group $Z_{2}.$
The elements of $G$ are of the form $x=\left(
x_{0},x_{1},...,x_{k},...\right) $ with $x_{k}\in \{0,1\}\left( k\in \mathbf{%
N}\right) .$ The group operation on $G$ is the coordinate-wise addition, the
measure (denote\thinspace $\,$by$\,\,\mu $) and the topology are the product
measure and topology. The compact Abelian group $G$ is called the Walsh
group. A base for the neighborhoods of $G$ can be given in the following
way:
\begin{eqnarray*}
I_{0}\left( x\right) &:&=G, \\
I_{n}\left( x\right) &:&=\,I_{n}\left( x_{0},...,x_{n-1}\right) :=\left\{
y\in G:\,y=\left( x_{0},...,x_{n-1},y_{n},y_{n+1},...\right) \right\} ,
\end{eqnarray*}%
where $x\in G$ and $n\in \mathbf{N}_{+}\mathbf{.}$ Denote $%
I_{n}:=I_{n}\left( 0\right) ,$ for $n\in \mathbf{N.}$

If $n\in \mathbf{N},$ then $n=\sum\limits_{i=0}^{\infty }n_{i}2^{i},$ where $%
n_{i}\in \{0,1\}\,\,\left( i\in \mathbf{N}\right) ,$ i. e. $n$ is expressed
in the number system of base 2. Denote $\left\vert n\right\vert :=\max
\{j\in \mathbf{N:}n_{j}\neq 0\}$, that is, $2^{\left\vert n\right\vert }\leq
n<2^{\left\vert n\right\vert +1}.$

Define the variation of an $n\in \mathbb{N}$ with binary coefficients $%
\left( n_{k},k\in \mathbb{N}\right) $ by

\begin{equation*}
V\left( n\right) =n_{0}+\overset{\infty }{\underset{k=1}{\sum }}\left\vert
n_{k}-n_{k-1}\right\vert .
\end{equation*}

For $k\in \mathbf{N}$ and $x\in G$ let us denote by
\begin{equation*}
r_{k}\left( x\right) :=\left( -1\right) ^{x_{k}}\,\,\,\,\,\,\left( x\in G,%
\text{ }k\in \mathbf{N}\right)
\end{equation*}%
the $k$-th Rademacher function.

The Walsh-Paley system is defined as the sequence of Walsh-Paley functions:
\begin{equation*}
w_{n}\left( x\right) :=\prod\limits_{k=0}^{\infty }\left( r_{k}\left(
x\right) \right) ^{n_{k}}=r_{\left\vert n\right\vert }\left( x\right) \left(
-1\right) ^{\sum\limits_{k=0}^{\left\vert n\right\vert
-1}n_{k}x_{k}}\,\,\,\,\,\,\left( x\in G,\text{ }n\in \mathbf{N}_{+}\right) .
\end{equation*}

The Walsh-Dirichlet kernel is defined by
\begin{equation*}
D_{n}\left( x\right) =\sum\limits_{k=0}^{n-1}w_{k}\left( x\right) .
\end{equation*}

Recall that (see \cite[p.7]{S-W-S})
\begin{equation}
D_{2^{n}}\left( x\right) =\left\{
\begin{array}{c}
2^{n},\text{ \ \ }x\in I_{n} \\
0,\,\,\,\text{\ \ \ }x\notin I_{n}%
\end{array}%
\right.  \label{dir1}
\end{equation}%
and

\begin{equation}
D_{m+2^{l}}\left( x\right) =D_{2^{l}}\left( x\right) +w_{2^{l}}\left(
x\right) D_{m}\left( x\right) ,\text{ when }\,\,m\leq 2^{l}.  \label{dir2}
\end{equation}

Denote by $L_{p}\left( G^{2}\right) ,$ $\left( 0<p<\infty \right) $ the
two-dimensional Lebesgue space, with corresponding norm $\left\Vert \cdot
\right\Vert _{p}.$

The number $\left\Vert D_{n}\right\Vert _{1}$ is called $n$-th Lebesgue
constant. Then (see \cite{S-W-S})

\begin{equation}
\frac{1}{8}V\left( n\right) \leq \left\Vert D_{n}\right\Vert _{1}\leq
V\left( n\right) .  \label{dir3}
\end{equation}

The rectangular partial sums of the two-dimensional Walsh-Fourier series of
a function $f\in L_{1}\left( G^{2}\right) $ are defined as follows:

\begin{equation*}
S_{M,N}f\left( x,y\right) :=\sum\limits_{i=0}^{M-1}\sum\limits_{j=0}^{N-1}%
\widehat{f}\left( i,j\right) w_{i}\left( x\right) w_{j}\left( y\right) ,
\end{equation*}%
where the numbers $\widehat{f}\left( i,j\right) :=\int_{G^{2}}f\left(
x,y\right) w_{i}\left( x\right) w_{j}\left( y\right) d\mu \left( x,y\right) $
\ is said to be the $\left( i,j\right) $-th Walsh-Fourier coefficient of the
function \thinspace $f.$

Let $f\in L_{1}\left( G^{2}\right) $. Then the dyadic maximal function is
given by
\begin{equation*}
f^{\ast }\left( x,y\right) =\sup\limits_{n\in \mathbf{N}}\frac{1}{\mu \left(
I_{n}(x)\times I_{n}(y)\right) }\left\vert \int\limits_{I_{n}(x)\times
I_{n}(y)}f\left( s,t\right) d\mu \left( s,t\right) \right\vert .\,\,
\end{equation*}

The dyadic Hardy space $H_{p}(G^{2})$ $\left( 0<p<\infty \right) $ consists
of all functions for which

\begin{equation*}
\left\Vert f\right\Vert _{H_{p}}:=\left\Vert f^{\ast }\right\Vert
_{p}<\infty .
\end{equation*}

If $f\in L_{1}\left( G^{2}\right) ,$ then (see \cite{Webook2})
\begin{equation}
\left\Vert f\right\Vert _{H_{1}}=\left\Vert \sup\limits_{k\in \mathbf{N}%
}\left\vert S_{2^{k},2^{k}}f\right\vert \right\Vert _{1}.  \label{1}
\end{equation}

It is known \cite[p.125]{G-E-S} that the Walsh-Paley system is not a
Schauder basis in $L_{1}\left( G\right) $. Moreover, there exists a function
in the dyadic Hardy space $H_{1}\left( G\right) $, the partial sums of which
are not bounded in $L_{1}\left( G\right) .$ However, Simon (\cite{Si} and
\cite{si1}) proved that there is an absolute constant $c_{p},$ depending
only on $p,$ such that
\begin{equation}
\frac{1}{\log ^{\left[ p\right] }n}\overset{n}{\underset{k=1}{\sum }}\frac{%
\left\Vert S_{k}f\right\Vert _{p}^{p}}{k^{2-p}}\leq c_{p}\left\Vert
f\right\Vert _{H_{p}}^{p},  \label{2}
\end{equation}%
for all $f\in H_{p}\left( G\right) ,$ where $0<p\leq 1,$ $S_{k}f$ denotes
the $k$-th partial sum of the Walsh-Fourier series of $f$ and $\left[ p%
\right] $ denotes integer part of $p.$ (For the Vilenkin system when $p=1$
see in Gát \cite{gat1}). When $0<p<1$ and $f\in H_{p}\left( G\right) $ the
author \cite{tep2} proved that sequence $\left\{ 1/k^{2-p}\right\}
_{k=1}^{\infty }$ in (\ref{2}) can not be improved.

For the two-dimensional Walsh-Fourier series some strong convergence
theorems are proved in \cite{tep1} and \cite{We}. Convergence of quadratic
partial sums was investigated in details in \cite{GGN, Go}. Goginava and
Gogoladze \cite{gg} proved that the following result is true:

\textbf{Theorem G}. Let $f\in H_{1}\left( G^{2}\right) $. Then there exists
absolute constant $c$, such that
\begin{equation}
\sum\limits_{n=1}^{\infty }\frac{\left\Vert S_{n,n}f\right\Vert _{1}}{n\log
^{2}\left( n+1\right) }\leq c\left\Vert f\right\Vert _{H_{1}}.  \label{3}
\end{equation}

The main aim of this paper is to prove that sequence $\left\{ 1/n\log
^{2}\left( n+1\right) \right\} _{n=1}^{\infty }$ in (\ref{3}) \ is essential
too. In particular, the following is true:

\begin{theorem}
Let \ $\Phi :\mathbf{N}\rightarrow \lbrack 1,$ $\infty )$ be any
nondecreasing function, satisfying the condition $\lim_{n\rightarrow \infty
}\Phi \left( n\right) =+\infty .$ Then%
\begin{equation*}
\underset{\left\Vert f\right\Vert _{H_{1}}\leq 1}{\sup }\underset{n=1}{%
\overset{\infty }{\sum }}\frac{\left\Vert S_{n,n}f\right\Vert _{1}\Phi
\left( n\right) }{n\log ^{2}\left( n+1\right) }=\infty .
\end{equation*}
\end{theorem}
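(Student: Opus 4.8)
The plan is to argue by contradiction: suppose the supremum were finite for some admissible $\Phi$. Then for every $f\in H_1(G^2)$ one would have
\begin{equation*}
\sum_{n=1}^{\infty }\frac{\left\Vert S_{n,n}f\right\Vert _{1}\Phi \left( n\right) }{n\log ^{2}\left( n+1\right) }\leq c\left\Vert f\right\Vert _{H_{1}},
\end{equation*}
with $c$ independent of $f$ (this uniformity comes from a Banach--Steinhaus / closed-graph argument, or simply by rescaling $f$). The strategy is then to construct a sequence of test functions $f=f_k\in H_1(G^2)$, normalized so that $\left\Vert f_k\right\Vert_{H_1}\leq c$, for which the left-hand side blows up faster than $\Phi$ along a subsequence, contradicting the assumed bound. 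The natural candidates are (two-dimensional) Walsh martingale atoms built from Dirichlet kernels with large Lebesgue constants: choose indices $n=n^{(k)}$ whose binary expansion has variation $V(n^{(k)})\sim \left\vert n^{(k)}\right\vert$ (for instance $n^{(k)}=\sum_{j=0}^{2N_k}2^{2j}$, so $V(n^{(k)})\sim N_k\sim \log n^{(k)}$), and take $f_k(x,y)$ to be essentially $w_{M_k}(x)w_{M_k}(y)(D_{2^{\alpha}}(x)-D_{2^{\beta}}(x))\cdots$ — more precisely a one-parameter family of the type $a_k(x)a_k(y)$ where $a_k$ is a one-dimensional $H_1(G)$-atom supported on a small dyadic interval $I_{N}$ with $\left\Vert a_k\right\Vert_{H_1(G)}\lesssim 1$. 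Using \eqref{1}, the $H_1(G^2)$-norm of such a product function is controlled, since $\sup_k\left\vert S_{2^k,2^k}(a\otimes a)\right\vert=(\sup_k\left\vert S_{2^k,2^k}a\right\vert)^{\otimes 2}$ up to the obvious factorization, so $\left\Vert a\otimes a\right\Vert_{H_1(G^2)}\leq \left\Vert a\right\Vert_{H_1(G)}^2\lesssim 1$ after normalization.

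The heart of the matter is the lower estimate for $\left\Vert S_{n,n}f_k\right\Vert_1$. For a product function $f_k=a_k\otimes a_k$ one has $S_{n,n}f_k=(S_n a_k)\otimes(S_n a_k)$, hence $\left\Vert S_{n,n}f_k\right\Vert_1=\left\Vert S_n a_k\right\Vert_1^2$. So I need a one-dimensional atom $a_k$ for which $\left\Vert S_n a_k\right\Vert_1$ is large for many $n$ close to $n^{(k)}$. Here I would exploit the decomposition \eqref{dir2} and \eqref{dir3}: writing $n^{(k)}$ in binary with maximal variation and choosing $a_k$ to reproduce (on its support) the Walsh function $w_{2^{N_k}}$ times a normalized indicator, one gets $S_{n^{(k)}}a_k$ comparable, in $L_1$, to a normalized Lebesgue constant $\left\Vert D_{n^{(k)}}\right\Vert_1\gtrsim V(n^{(k)})\gtrsim \log n^{(k)}$; squaring gives $\left\Vert S_{n^{(k)},n^{(k)}}f_k\right\Vert_1\gtrsim \log^2 n^{(k)}$. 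This is exactly the size needed to make the single term $n=n^{(k)}$ in the sum contribute
\begin{equation*}
\frac{\left\Vert S_{n^{(k)},n^{(k)}}f_k\right\Vert_1\,\Phi(n^{(k)})}{n^{(k)}\log^2(n^{(k)}+1)}\gtrsim \frac{\Phi(n^{(k)})}{n^{(k)}},
\end{equation*}
which alone is not enough; so instead I would arrange the estimate to hold simultaneously for all $n$ in a block $2^{N_k}\leq n< 2^{N_k+1}$ (or a large fraction thereof), using that Dirichlet kernels $D_n$ for $n$ in such a block, built from the same "high-variation" pattern, all have $L_1$-norm $\gtrsim N_k$. Summing the resulting lower bounds $\gtrsim \sum_{n\asymp 2^{N_k}}\frac{N_k^2\,\Phi(2^{N_k})}{n\,N_k^2}\gtrsim \Phi(2^{N_k})$ over the block yields a contribution $\gtrsim \Phi(2^{N_k})\to\infty$, the desired contradiction.

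The main obstacle is the second step: producing a one-dimensional atom $a_k$, with $\left\Vert a_k\right\Vert_{H_1(G)}\lesssim 1$, whose partial sums $\left\Vert S_n a_k\right\Vert_1$ stay comparable to the Lebesgue constant $\left\Vert D_n\right\Vert_1$ uniformly over a whole dyadic block of indices $n$, not just for one value. This requires a careful choice of the binary support of $n$ and of the martingale difference structure of $a_k$, together with the kernel identities \eqref{dir1}--\eqref{dir2} to peel off the contributions; the estimate \eqref{dir3} then converts variation into $L_1$-size. A secondary point to handle with care is the transference from the product atom bound $\left\Vert a\otimes a\right\Vert_{H_1(G^2)}\lesssim \left\Vert a\right\Vert_{H_1(G)}^2$ to the normalization $\left\Vert f_k\right\Vert_{H_1}\leq 1$, and the verification, via \eqref{1}, that the two-dimensional Hardy norm of the tensor product genuinely factors (which it does, because $S_{2^k,2^k}(a\otimes a)=(S_{2^k}a)\otimes(S_{2^k}a)$ and the supremum of a product of nonnegative functions of separate variables is the product of the suprema). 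Once these pieces are in place, the contradiction with the assumed finiteness of the supremum is immediate, proving the theorem.
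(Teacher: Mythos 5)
You have correctly identified the overall architecture --- test functions of tensor--product form built from Dirichlet kernels, normalized in $H_{1}(G^{2})$, a lower bound for $\left\Vert S_{n,n}f\right\Vert _{1}$ coming from Lebesgue constants via the variation estimate (\ref{dir3}), and divergence extracted by summing over a whole dyadic block of indices. This is indeed the paper's strategy. However, you leave the decisive step unproved: you explicitly flag as ``the main obstacle'' the construction of an atom $a_{k}$ whose partial sums have $L_{1}$-norm comparable to a Lebesgue constant across a block of indices, and the route you sketch for overcoming it does not work as stated. You propose to choose indices ``built from the same high-variation pattern'' so that \emph{all} $n$ in the block $2^{N_{k}}\leq n<2^{N_{k}+1}$ satisfy $\left\Vert D_{n}\right\Vert _{1}\gtrsim N_{k}$; this is false, since $V(n)$ ranges from $2$ (at $n=2^{N_{k}}$) up to about $N_{k}$ over any dyadic block, so no choice of ``pattern'' makes the bound pointwise. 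What is true, and what the paper actually uses, is an \emph{averaged} statement: Fine's asymptotic $\frac{1}{n\log n}\sum_{k=1}^{n}V(k)=\frac{1}{4\log 2}+o(1)$, combined with the Cauchy--Schwarz inequality, gives $\sum_{k=1}^{2^{n}}V^{2}(k)\geq c\,n^{2}2^{n}$, which suffices after summation over the block even though individual terms may be small.

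The missing construction is also much simpler than you anticipate: take $a_{n}=D_{2^{n+1}}-D_{2^{n}}$ and $f_{n,n}=a_{n}\otimes a_{n}$. Then $\left\Vert f_{n,n}\right\Vert _{H_{1}}=\left\Vert f_{n,n}\right\Vert _{1}=1$ by (\ref{dir1}) and (\ref{1}), because each dyadic partial sum $S_{2^{k},2^{k}}f_{n,n}$ equals either $0$ or $f_{n,n}$; and for $2^{n}<k\leq 2^{n+1}$ the identity (\ref{dir2}) gives $S_{k}a_{n}=D_{k}-D_{2^{n}}=w_{2^{n}}D_{k-2^{n}}$, hence $\left\Vert S_{k,k}f_{n,n}\right\Vert _{1}=\left\Vert D_{k-2^{n}}\right\Vert _{1}^{2}\geq cV^{2}(k-2^{n})$. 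As $k$ runs over the block, $k-2^{n}$ runs over $1,\dots ,2^{n}$, and the averaged estimate above yields $\sum_{k=2^{n}+1}^{2^{n+1}}\frac{\left\Vert S_{k,k}f_{n,n}\right\Vert _{1}\Phi (k)}{k\log ^{2}(k+1)}\geq c\Phi (2^{n})\rightarrow \infty $; no contradiction argument or uniform--boundedness principle is needed. So your outline points in the right direction, but as written the proof is incomplete at its central step, and the pointwise variation claim you would need in order to complete it along your suggested lines is not correct.
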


\begin{proof}
Let \qquad
\begin{equation*}
f_{n,n}\left( x,y\right) =\left( D_{2^{n+1}}\left( x\right) -D_{2^{n}}\left(
x\right) \right) \left( D_{2^{n+1}}\left( y\right) -D_{2^{n}}\left( y\right)
\right) .
\end{equation*}%
It is easy to show that%
\begin{equation}
\widehat{f_{n,n}}(i,j)=\left\{
\begin{array}{l}
1,\,\text{if }\left( \,i,\text{\thinspace }j\right) \in \left\{ 2^{n},...,%
\text{ ~}2^{n+1}-1\right\} ^{2},\text{ } \\
0,\text{ if \thinspace }\left( \,i,\text{\thinspace }j\right) \notin \left\{
2^{n},...,\text{ ~}2^{n+1}-1\right\} ^{2}.%
\end{array}%
\right.  \label{4}
\end{equation}

Applying (\ref{dir1}) and (\ref{1}) we have%
\begin{equation}
\left\Vert f_{n_{,}n}\right\Vert _{H_{1}}=\left\Vert \sup\limits_{k\in
\mathbf{N}}\left\vert S_{2^{k},2^{k}}f_{n,n}\right\vert \right\Vert
_{1}=\left\Vert f_{n_{,}n}\right\Vert _{1}=1.  \label{5}
\end{equation}

Let\textbf{\ } $2^{n}<k\leq 2^{n+1}$. Combining (\ref{dir2}) and (\ref{4})
we get

\begin{eqnarray*}
&&S_{k,k}f_{n,n}\left( x,y\right)
=\sum_{i=2^{n}}^{k-1}\sum_{j=2^{n}}^{k-1}w_{i}\left( x\right) w_{j}\left(
y\right) =\left( D_{_{k}}\left( x\right) -D_{2^{n}}\left( x\right) \right)
\left( D_{k}\left( y\right) -D_{2^{n}}\left( y\right) \right) \\
&=&w_{2^{n}}\left( x\right) w_{2^{n}}\left( y\right) D_{k-2^{n}}\left(
x\right) D_{k-2^{n}}\left( y\right) .
\end{eqnarray*}

Using (\ref{dir3}) we have%
\begin{equation}
\left\Vert S_{k,k}f_{n,n}\left( x,y\right) \right\Vert _{1}\geq
\int\limits_{G^{2}}\left\vert D_{k-2^{n}}\left( x\right) D_{k-2^{n}}\left(
y\right) \right\vert d\mu \left( x,y\right) \geq cV^{2}\left( k-2^{n}\right)
.  \label{6}
\end{equation}

Let $\Phi \left( n\right) $ be any nondecreasing, nonnegative function,
satisfying condition $\lim_{n\rightarrow \infty }\Phi \left( n\right)
=\infty .$ Since (see Fine \cite{FF})
\begin{equation*}
\frac{1}{n\log n}\underset{k=1}{\overset{n}{\sum }}V\left( k\right) =\frac{1%
}{4\log 2}+o\left( 1\right) ,
\end{equation*}

using (\ref{5}) and (\ref{6}) and Cauchy-Schwarz inequality we obtain%
\begin{eqnarray*}
&&\underset{\left\Vert f\right\Vert _{H_{1}}\leq 1}{\sup }\underset{k=1}{%
\overset{2^{n+1}}{\sum }}\frac{\left\Vert S_{k,k}f\right\Vert _{1}\Phi
\left( k\right) }{k\log ^{2}\left( k+1\right) }\geq \underset{n=2^{n}+1}{%
\overset{2^{n+1}}{\sum }}\frac{\left\Vert S_{k,k}f_{n,n}\right\Vert _{1}\Phi
\left( k\right) }{k\log ^{2}\left( k+1\right) } \\
&\geq &\frac{c\Phi \left( 2^{n}\right) }{n^{2}2^{n}}\underset{n=2^{n}+1}{%
\overset{2^{n+1}}{\sum }}V^{2}\left( k-2^{n}\right) \geq \frac{c\Phi \left(
2^{n}\right) }{n^{2}2^{n}}\underset{k=1}{\overset{2^{n}}{\sum }}V^{2}\left(
k\right)  \\
&\geq &c\Phi \left( 2^{n}\right) \left( \frac{1}{n2^{n}}\underset{k=1}{%
\overset{2^{n}}{\sum }}V\left( k\right) \right) ^{2}\geq c\Phi \left(
2^{n}\right) \rightarrow \infty ,\text{ when }n\rightarrow \infty .
\end{eqnarray*}

Which complete the proof of Theorem 1.
\end{proof}


\begin{thebibliography}{99}
\bibitem{FF} \textit{N.J. Fine}, On the Walsh function, Trans. Amer. Math.
Soc. 65 (1949), 372-414.

\bibitem{gat1} \textit{G. Gát,} Inverstigations of certain operators with
respect to the Vilenkin sistem, Acta Math. Hung., 61 (1993), 131-149.

\bibitem{GGN} \textit{G. Gát, U. Goginava, K. Nagy}. On the Marcinkiewicz-Fej%
ér means of double Fourier series with respect to the Walsh-Kaczmarz system.
Studia Sci. Math. Hungar. 46 (2009), no. 3, 399--421.

\bibitem{GGT} \textit{G. Gát, U. Goginava, G. Tkebuchava. }Convergence in
measure of logarithmic means of quadratical partial sums of double
Walsh-Fourier series. J. Math. Anal. Appl. 323 (2006), no. 1, 535--549.

\bibitem{go} \textit{U. Goginava,}The weak type inequality for the maximal
operator of the Marcinkiewicz-Fejer means of the two-dimensional
Walsh-Fourier series. J. Approximation Theory , 154, 2 (2008), 161-180.

\bibitem{gg} \textit{U. Goginava, L. D. Gogoladze, }Strong Convergence of
Cubic Partial Sums of Two-Dimensional Walsh-Fourier series, Constructive
Theory of Functions, Sozopol 2010: In memory of Borislav Bojanov. Prof.
Marin Drinov Academic Publishing House, Sofia, 2012, pp. 108-117.

\bibitem{Go} \textit{L. D. Gogoladze, }On the strong summability of Fourier
series, Bull of Acad. Scie. Georgian SSR, 52, 2 (1968), 287-292.

\bibitem{G-E-S} \textit{B. Golubov, A. Efimov} and \textit{V. Skvortsov,}
Walsh series and transformations\textit{,} Kluwer Academic publishers.
Dordrecht, Boston, London, 1991.

\bibitem{S-W-S} \textit{F. Schipp,} W.R. Wade, P. Simon and J. Pál, Walsh
Series, an Introduction to Dyadic Harmonic Analysis. Adam Hilger, Bristol,
New York, 1990.

\bibitem{Si} \textit{P. Simon,} Strong convergence of certain means with
respect to the Walsh-Fourier series, Acta Math. Hung. 49 (1987), 425-431.

\bibitem{si1} \textit{P. Simon. }Strong Convergence theorem for
Vilenkin-Fourier Series. Journal of Mathematical Analysis and Applications,
245, (2000), pp. 52-68.

\bibitem{tep1} \textit{G. Tephnadze, }Strong convergence of two--dimensional
Walsh-Fourier series, Ukrainian Mathematical Journal, (to appear).

\bibitem{tep2} \textit{G. Tephnadze, }A note on the Fourier coefficients and
partial sums of Vilenkin-Fourier series, Acta Mathematica Academiae
Paedagogicae Nyiregyhaziensis (AMAPN), (to appear).

\bibitem{Webook2} \textit{F. Weisz,} Summability of multi-dimensional
Fourier series and Hardy space, Kluwer Academic, Dordrecht, Boston, London,
2002.

\bibitem{We} \textit{F.Weisz,} Strong convergence theorems for two-parameter
Walsh-Fourier and trigonometric-Fourier series. (English) Stud. Math. 117,
No.2, (1996), 173-194.
\end{thebibliography}
\end{document}